 \newcommand{\Int}{\operatorname{int}}
 \newcommand{\real}{\operatorname{Re}}
 \newcommand{\imag}{\operatorname{Im}}
\newcommand{\Clos}{\operatorname{clos}} 
\newcommand{\Span}{\operatorname{span}}
\newcommand{\Codim}{\operatorname{codim}}
\newcommand{\Dim}{\operatorname{dim}}
\newcommand{\Rank}{\operatorname{rank}}
\newcommand*\diff{\mathop{}\!\mathrm{d}}
\declaretheoremstyle[bodyfont=\normalfont]{normalfont}
\declaretheorem[name={Example},qed={\lower-0.3ex\hbox{$\square$}} ] {Example}
\declaretheorem[name={Definition}  ] {Definition}
\declaretheorem[name={Theorem}, style=normalfont] {Theorem}
\declaretheorem[name={Lemma}, style=normalfont ] {Lemma}
\declaretheorem[name={Remark}  ] {Remark}
\declaretheorem[name={Corollary} ,style=normalfont ] {Corollary}
\newcommand {\R}{\mathbb R}
\newcommand {\C}{\mathbb C}
\newcommand{\be}{\begin{equation}}
\newcommand{\ee}{\end{equation}}
\begin{document}
%


\title{Discrete-time $k$-positive linear systems\thanks{The research of MM is supported in part by  research grants from   the Israel Science Foundation and the US-Israel Binational Science Foundation.}}
 \author{Rola Alseidi,    Michael Margaliot, and  J{\"u}rgen Garloff\thanks{
 		\IEEEcompsocthanksitem
 		Rola Alseidi and J{\"u}rgen Garloff  are with the 
 		Department of Mathematics and Statistics, University of Konstanz, Germany. J{\"u}rgen Garloff is also with the Institute for Applied Research at the University of Applied Sciences / HTWG Konstanz.
 		\IEEEcompsocthanksitem
 		Michael Margaliot (Corresponding Author) is  with the School of Electrical  Engineering,
 		Tel-Aviv University, Tel-Aviv~69978, Israel.
 		E-mail: \texttt{michaelm@eng.tau.ac.il}
 }}
 
\maketitle
\begin{center} 
							
\end{center}

\begin{abstract}
 Positive systems play an important role in systems and control theory and have found many applications in multi-agent systems, neural networks, systems biology, and more. Positive systems map the nonnegative orthant to ​itself (and also the nonpositive orthant to itself). ​
 In other words, they map the set of vectors with zero sign variations to itself. In this note, discrete-time linear systems that map ​
 the set of vectors with up to~$k-1$ sign variations to itself are introduced. For the special case~$k=1$ these reduce to 
   discrete-time positive linear systems. ​Properties of these systems are analyzed using tools from  the theory of sign-regular  matrices. In particular, it is shown that almost every solution of such systems converges to the set of vectors with up to~$k-1$ sign variations. Also, the operation of such systems on $k$-dimensional parallelotopes are studied.

 \end{abstract}



	\begin{IEEEkeywords}
		Sign-regular matrices, cones  of rank~$k$,​
		exterior products, compound matrices, stability analysis.​ 
	\end{IEEEkeywords}

\section{Introduction} 
	For two vectors~$a, b\in\R^n$, we write~$b\leq a$ if~$b_\ell\leq a_\ell$ for all~$\ell \in\{1,\dots,n\}$. 
  Inequalities between matrices are also understood as entry-wise.
	
	Consider the discrete-time~(DT) 
linear time-varying~(LTV) 
system
\be\label{eq:dts}
x(i+1)=A(i)x(i),\quad x(0)=x_0\in\R^n.
\ee
Let~$x(i,x_0)$ denote the 
solution of~\eqref{eq:dts} at time~$i$. 
  The LTV~\eqref{eq:dts}  is called \emph{positive}
 if~$A(i)\geq 0$
for all~$i\geq 0$. 
Then clearly 
\be 
b\leq a \implies x(i,b)\leq x(i,a) \text{ for all } i\geq 0 . 
\ee
In particular,
\[
0\leq a \implies 0\leq x(i,a) \text{ for all } i\geq 0 ,
\]
i.e.,  
a positive system maps the nonnegative orthant 
\[
\R^n_+:=\{x\in\R^n:x_i\geq 0 \text{ for all } i\}
\]
 to itself (and also~$\R^n_-$ to itself). 
The system is called \emph{strongly positive} if it maps~$\R^n_+\setminus\{0\}$ to~$\Int(\R^n_+)$ (the  
interior of~$\R^n_+$).

Positive systems appear naturally when the state-variables represent quantities
that can only take nonnegative values,  e.g.,  probabilities, concentrations of molecules, densities of particles, etc. 
Positive LTVs play an important role in linear systems and control theory, see, e.g.,~\cite{farina2000,posi-tutorial}, and via \emph{differential analysis}~\cite{forni2016, LOHMILLER1998683},
 also   in the analysis of nonlinear systems. 
To explain this, consider the \emph{nonlinear}
 time-varying  system
\be\label{eq:nonline}
x(i+1)=f(i,x(i)),
\ee
and suppose that its trajectories evolve on a convex state-space~$\Omega \subseteq \R^n$,
and that~$f$ is~$C^1$ with respect to~$x$.
For~$y\in\Omega$, let~$x(i,y)$ denote the solution of~\eqref{eq:nonline} at time~$i$ for~$x(0)=y$. 
Pick~$a,b\in\Omega$ and let
\[
z(i):=x(i,a)-x(i,b),
\]
that is, the difference at time~$i$ between the trajectories emanating from~$a$ and from~$b$ at 
time zero. Then
\begin{eqnarray}\label{eq:vari}
z(i+1)&=f(i,x(i,a))-f(i,x(i,b))=J^{ab} (i)z(i) \\
  \text{with }  J^{ab} (i)    & := \int_0^1 \frac{\partial}{\partial r} f(i, r x(i,a) +(1-r) x(i,b) ) \diff r \nonumber.
      \end{eqnarray}

If~$J^{ab}(i)\geq0$ for all~$a,b\in\Omega$ and all~$i\geq0$, then the \emph{variational system}~\eqref{eq:vari} is a positive~LTV, and this has important consequences for the behavior of~\eqref{eq:nonline}. 
Roughly speaking, almost every  bounded trajectory  of a smooth strongly positive   system
converges  to a periodic 
trajectory (a cycle)~\cite{Typical_mono_ds}. 
This is quite different from  the behavior in the
continuous-time case, where 
almost every  bounded trajectory  of the nonlinear system 
converges  to the set of equilibria~\cite{hlsmith}.

The dynamics of  a DT positive LTV  maps the set of vectors with zero sign variations
to itself. A natural question is: what systems map the set of vectors with up to~$k-1$
sign variations to itself? We call such a system a \emph{DT~$k$-positive system}. 
Then a~$1$-positive system is just a positive system, but for~$k>1$
the system may be~$k$-positive yet not positive.

 Continuous-time~(CT)   $k$-positive systems have  been recently
defined and analyzed in~\cite{eyal_k_posi}.   In the CT and time-invariant 
case, i.e.,~$\dot x(t)=Ax(t)$,
 the \emph{matrix exponential}
 of~$A$ should satisfy for all time a property  called strict
 sign-regularity of order~$k$, for the definition see the next paragraph, 
and this can be mapped to simple to check sign
conditions on~$A$ itself~\cite{eyal_k_posi}. 
In the DT case studied here, the matrix~$A$ itself 
must have this property, and verifying  this is nontrivial.

A matrix~$A\in \R^{n\times m}$
  is called  \emph{sign-regular of order~$k$}  (denoted by~$SR_k$)
if all its minors of order~$k$ are non-negative or all are non-positive. For example, if all the entries of~$A$ are non-negative then it is~$SR_1$.	A matrix is  called \emph{strictly sign-regular of order $k$} (denoted by~$SSR_k$) if it is~$SR_k$, and all the minors of order~$k$ are non-zero. In other words, all minors of order $k$ are non-zero and have the same sign.\footnote{We note that the terminology in this field is not uniform and some authors refer to such matrices as  sign-consistent of order $k$.}

For example, consider the 
matrix 
$$A:=\begin{bmatrix}  1& 2 & 0 &0 \\
0&1&1&0 \\ 0 & 0&2 &0.1\\1&0&0 &2
\end{bmatrix}.$$
This matrix is~$SR_1$ (but not~$SSR_1$ as some entries are zero). It has both positive and negative~$2$-minors (e.g., $\det(\begin{bmatrix} 1&2\\0&1\end{bmatrix})=1$,
$\det(\begin{bmatrix} 1&2\\1&0\end{bmatrix})=-2$),
so it is not~$SR_2$. All its~$3$-minors are positive, so it is~$SSR_3$, and~$\det(A)>0$, so it is~$SSR_4$. \\

 After the first consideration of $SR_k$ matrices  in \cite{karlin1968total}, these  matrices   have been the subject of only a few studies. In Ref.~\cite{rola_spect}, the authors  analyze
	 the spectral properties of nonsingular matrices that are~$SSR_k$ for a specific value of~$k$. 
	These results are extended to  matrices that are~$SSR_k$ for several values of~$k$, for example for all odd~$k$.

To refer to the common sign of the minors of  order $k$,  we introduce the signature  $\epsilon \in \{-1,1\}$.  A matrix $A \in \mathbb{R}^{n \times m}$ is called 
\textit{[strictly] sign-regular} $([S]SR)$ if it is $[S]SR_k$ for all~$k=1,\dots,\min\{n,m\}$.
 


The most important examples of~$SR$ [$SSR$] matrices
	  are the totally nonnegative~\emph{(TN)} [totally positive~\emph{(TP)}]
 matrices, that is,
matrices  with all minors nonnegative [positive].
Such matrices have applications in  numerous  fields including 
approximation theory, combinatorics, probability theory,  computer aided geometric design, differential and integral equations, and more~\cite{total_book,gk_book,karlin1968total,pinkus}. 

A very important property  of~$SSR$ matrices is  that multiplying a vector $x$ by such a matrix cannot  increase the number of sign variations in $x$~\cite{gk_book}. 
To explain this \emph{variation diminishing property}~(VDP), we introduce some notation.
For~$y \in \mathbb{R}^n$, 
let~$s^-(y)$   denote  the number of sign variations
in~$y$ after deleting all its zero entries,
and let~$s^+(y)$   denote  the maximal possible number of sign variations
in~$y$ after each zero entry is replaced by either~$+1$ or~$-1$. 
For example, for~$n=4$ and~$y=\begin{bmatrix} 1& -1 & 0  &  -\pi  \end{bmatrix}^T$ (where the superscript~$T$ denotes transposition), we have~$s^-(y)=1$ and~$s^+(y)=3$. 
Obviously,
\be\label{eq:smsp}
0\leq s^-(y) \leq s^+(y)\leq n-1 \text{ for all } y\in\R^n.
\ee

The first important results on the VDP of matrices were obtained by Fekete and P{\'o}lya~\cite{fekete1912} and Schoenberg~\cite{Schoenberg1930}. Later on, Gantmacher and Krein \cite[Chapter V]{gk_book} elaborated rather completely the various forms of VDPs and worked out the spectral properties of~$SR$ matrices. Two important
examples of such~VDPs are: if~$A\in\mathbb{R}^{n\times m}$ ($m \leq n$) 
is~$SR$ and of rank~$m$ then
\begin{equation*}
s^-(Ax)\leq s^-(x) \text{ for all } x \in \mathbb{R}^m,
\end{equation*}
whereas if $A$ is   $SSR$ then  
\begin{equation*}
s^+(Ax)\leq s^-(x) \text{ for all } x \in \mathbb{R}^m \setminus \{0\}.
\end{equation*} 
Note that combining this with~\eqref{eq:smsp}
implies that both~$s^-(x(i))$ and~$s^+(x(i))$ can be used as an integer-valued Lyapunov function for the system~$  x(i+1)=Ax(i)$.

For~$k\in\{1,\dots,n\}$,  let
 \begin{align}\label{eq:defpk0}
 P^k_- &:=\{ z\in\R^n : s^-(z)\leq k-1 \},\nonumber\\
  P_+^k &:=\{ z\in\R^n : s^+(z)\leq k-1 \}.
	\end{align}
Then 
positive systems map the set~$P^1_-$ to~$P^1_-$, whereas strongly positive systems map~$P^1_-$ to~$P^1_+$. This naturally leads to the question: which linear systems map~$P^k_-$ to~$P^k_-$ 
and which map~$P^k_-$ to~$P^k_+$?
In this  this paper,  we define and analyze such systems, called    DT $k$-positive linear systems. We show that such systems have interesting dynamical properties that generalize the properties of positive systems. 

The remainder of this paper 
 is organized as follows. In the next section, we review
 notations, definitions, and basic properties   that will be used later on.
Section~\ref{sec:main1} defines   DT $k$-positive linear systems and analyzes their properties.
The final section concludes. In passing we note that our results are part of a growing
body of 
  research   on the applications of sign-regularity (and, in particular, total positivity) to dynamical systems~\cite{rola_spect,CTPDS,rami_osci,margaliot2019revisiting,schwarz1970,eyal_k_posi}.

\section{Preliminaries}
We briefly review several known results that will be used later on. 
\subsection{Basic notation and definitions}
  For an integer~$n\geq 1$ and~$k\in\{1,\dots,n\}$,
	let~$Q_{k,n}$ denote the set of all strictly increasing sequences of~$k$ integers chosen from $\{1,\dots,n\}$.
	For example,~$Q_{2,3}=\{  12,13,23    \}$. 

	For  $A \in \mathbb {R}^{n\times m}$, $\alpha \in Q_{k,n}$, and~$\beta \in Q_{j,m}$, we denote the submatrix of~$A$ lying in the rows indexed by~$\alpha$ and columns indexed by~$\beta$ by $A[\alpha,\beta] $.
	Thus,~$A[\alpha,\beta]  \in \R^{k\times j}$. If~$k=j$ then we set
	\[
	A(\alpha|\beta):=\det (A[\alpha,\beta] ),
	\]
	that is, the minor corresponding to the rows indexed by~$\alpha$ and columns indexed by~$\beta$.
 We often suppress the brackets associated with  an index sequence if we enumerate its entries explicitly. 

\subsection{Multiplicative compound}
Let~$A\in \R^{ n\times m}$. For any~$k\in\{1,\dots,\min\{n,m\}\}$, the~\emph{$k$th multiplicative compound} of~$A$ is the~$\binom{n}{k}\times \binom{m}{k}$ matrix that includes all the minors of order $k$ of~$A$ 
organized in lexicographic order.
For example, if~$A\in\R^{3\times 3}$ then
\[
A^{(2)}= \begin{bmatrix} 
A(12|12) & A(12|13) & A(12|23) \\
A(13|12) & A(13|13) & A(13|23) \\
A(23|12) & A(23|13) & A(23|23) 
\end{bmatrix}.
\]
 Note that~$A^{(1)}=A$ and that if~$m=n$ then~$A^{(n)}=\det(A)$. 
Note also that~$A$ is~$SSR_k$ [$SR_k$]
if either~$A^{(k)}>0$ or~$A^{(k)}<0$ [either~$A^{(k)}\geq 0$ or~$A^{(k)}\leq 0$]. 

The Cauchy-Binet Formula~\cite[Theorem 1.1.1]{total_book} provides an expression for the minors of the product 
 of two matrices.
Pick~$A \in \mathbb{R}^{n\times p}$ and $B \in \mathbb{R}^{p \times m}$. Let~$C:=AB$.  Pick~$k\in \{1,\dots, \min \{n,p,m\}\}$,
$\alpha \in Q_{k,n}$, and~$\beta \in Q_{k,m}$. Then
	\begin{eqnarray}\label{bienteq}
	C(\alpha|\beta)=\sum_{\gamma \in Q_{k,p}}A(\alpha|\gamma)B(\gamma|\beta).
	\end{eqnarray} 
For~$n=p=m$ and~$k=n$ this reduces to the familiar formula~$\det(AB)=\det(A)\det(B)$. 	
Note that~\eqref{bienteq} implies that 
\be\label{eq:mulcom}
(AB)^{(k)}=A^{(k)} B ^{(k)}
\ee
for all~$k\in  \{1,\dots, \min \{n,p,m\}\}$.
This justifies the term   multiplicative compound. 

\subsection{Sets of vectors with sign variations}
	Consider the sets defined in~\eqref{eq:defpk0}. It is straightforward to show using the 
	definitions of~$s^-$, $s^ +$ and~\eqref{eq:smsp}
	that~$P^k_-$ is closed and that
	\[
	P_+^k=\Int(P_-^k).
	\]

 It is clear that
\be\label{eq:p1}
P^1_-=\R^n_+ \cup \R^n_- , \quad  P^1_+=\Int (\R^n_+ )\cup  \Int( \R^n_-) .
\ee
Also, the sets are    nested, as 
\begin{align}\label{eq:psdv}
   P^1_- \subset P^2_- \subset \dots \subset P^n_-=\R^n,\nonumber\\
   P^1_+ \subset P^2_+ \subset \dots \subset P^n_+=\R^n.
\end{align}
If~$x\in P^k_-$ then~$\alpha x \in P^k_-$ for all~$\alpha\in\R$,
and if~$x\in P^k_+$ then~$\beta x \in P^k_+$ for all~$\beta \in\R\setminus\{0\}$,
so both~$P^k_-$ and~$P^k_+ \cup\{0\}$ are cones.
Yet,   in general~$P^k_-$ and~$P^k_+$ are \emph{not}
convex sets. For example, for~$n=2$ and the vectors~$x:=\begin{bmatrix} 2& 0\end{bmatrix}^T$,
 $y:=\begin{bmatrix} 0& -2 \end{bmatrix}^T$, we have~$x,y\in P^1_-$ yet
$\frac{x}{2}+\frac{y}{2} =\begin{bmatrix} 1&  -1\end{bmatrix}^T \not \in P^1_-$.

Recall that a set $C \subseteq \mathbb{R}^n $ is called \textit{a cone of rank~$k$}~\cite{pls_sobolev} if
\begin{enumerate}[label=(\roman*)]
	\item $C$ is closed;
	\item $x\in C$ implies that $\alpha x \in C$ for all $\alpha \in \mathbb{R}$;
	\item $C$ contains a linear subspace of dimension $k$ and no linear subspace of higher dimension.
\end{enumerate}
For example,  $\mathbb{R}_+^2 \cup \mathbb{R}_-^2$ (and more generally, $\mathbb{R}_+^n \cup \mathbb{R}_-^n$~\cite{fuscoPF}) is a cone of rank~1. A cone~$C$ of rank~$k$ is called \textit{solid} if its interior is non empty, and $k$-$solid$ if there is a linear subspace~$W$ of dimension~$k$ such that $W\setminus \{0 \} \subseteq \Int(C)$; $k$-solid cones are useful
 in the analysis of dynamical systems~\cite{hordercones17, generic_kcone,p_domi,sanchez2009cones}.
Roughly speaking, if a trajectory of the system is confined to an invariant set~$C$ 
that is a $k$-solid cone then the trajectory can be projected onto a~$k$-dimensional subspace contained in~$C$.
If this projection is one-to-one then the trajectory is topologically conjugate to a trajectory of a~$k$-dimensional dynamical system.

It was shown in~\cite{eyal_k_posi} (see also~\cite{pls_sobolev}) that for
any~$k\in \{1,\dots,n\}$,
the set~$P^k_-$
is a~$k$-solid cone, and that its complement
\[
(P^k_-)^c:=\Clos(\R^n\setminus P^k_-)
\]
is an~$(n-k)$-solid cone. 
This implies, in particular, that there exists a~$k$-dimensional subspace~$W^k$ such that~$W^k\subseteq P^k_-$, and that there is no~$(k+1)$-dimensional subspace contained in~$P^k_-$. For example, 
let~$e^i \in\R^n$ denote the vector with all entries zero,
except for entry~$i$ that is one. 
Then the~$k$-dimensional subspace
$\Span\{e^1,\dots,e^k\}$ 
is contained in~$P^k_-$.

\subsection{Linear mappings that preserve the number of sign variations in a vector}
Recall that a matrix~$A\in \R^{n\times m}$ is termed~$SR_k$ if all its minors of order~$k$
are either all nonnegative or all nonpositive and it is called $SSR_k$ if 
all its minors of order~$k$ are all positive or all negative,  see the example in the Introduction.

Let~$A\in \R^{n\times n}$ be a nonsingular matrix. Pick~$k\in\{1,\dots,n\}$. 
It was shown in~\cite{CTPDS} that~$A$ maps~$P^k_- \setminus\{0\}$ to~$P^k_+$ if and only if~$A$ is~$SSR_k$, 
and a continuity argument~\cite{eyal_k_posi} implies that~$A$
 maps~$P^k_-$ to~$P^k_-$ if and only if
$A$ is~$SR_k$. For example, the nonsingular matrix 
$$A:=\begin{bmatrix}  10& 4& 1\\
 1& 3& 1 \\  2& 4& 6
\end{bmatrix}$$  is not~$SR_2$,  as it has both positive and negative minors of order~$2$
(e.g.,  $A(1,2|1,2)=26$ and $A(2,3|1,2)=-2$), so it does not map~$P^2_-$ to itself. 
Indeed, for~$x=\begin{bmatrix} 19&-6&-2\end{bmatrix}^T$, we have~$x \in P^2_-$ 
and~$Ax= \begin{bmatrix} 164&-1&2\end{bmatrix}^T \not \in P^2_-$.

We can now introduce and analyze a new class of DT linear systems.

\section{Discrete-time~$k$-positive linear systems}\label{sec:main1}

\begin{Definition}
Consider the DT LTV~\eqref{eq:dts} 
with
every matrix~$A(i)$   nonsingular.
The system is called  a~\emph{$k$-positive system} if 
it maps~$P^k_-$ to~$P^k_-$, and a \emph{strongly~$k$-positive system} if it maps~$P^k_-\setminus\{0\}$ to~$P^k_+$.
\end{Definition}

Note that~\eqref{eq:p1} implies that a~[strongly] $1$-positive system is 
simply a [strongly] positive system. Note also that since~$P^k_+ =\Int(P^k_-)$, both~$P^k_-$ and~$P^k_+$
are invariant sets of 
 a strongly~$k$-positive system.

	The next result follows from~\cite[Theorem~1]{CTPDS}
	and~\cite[Theorem~2]{eyal_k_posi}.
\begin{Corollary}
The system~\eqref{eq:dts}  is
a [strongly] $k$-positive system if and only if~$A(i)$ is $[S]SR_k$  for all~$i \geq 0$. 

\end{Corollary}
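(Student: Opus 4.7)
The plan is to derive the corollary directly from the two single-step characterizations recalled at the end of the previous subsection: for a nonsingular $A\in\R^{n\times n}$, (i) $A$ is $SR_k$ iff $A$ maps $P^k_-$ into $P^k_-$ (the $SR$ version obtained by the continuity argument in~\cite{eyal_k_posi}), and (ii) $A$ is $SSR_k$ iff $A$ maps $P^k_-\setminus\{0\}$ into $P^k_+$ (Theorem~1 of~\cite{CTPDS}). Since~\eqref{eq:dts} is nothing but the sequential composition of the one-step maps $A(i)$, the proof should reduce to applying these characterizations at each $i$ and iterating.

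For sufficiency, assume every $A(i)$ is $SR_k$. By (i) each $A(i)$ sends $P^k_-$ into itself, so induction on $i$ starting from $x(0,x_0)=x_0\in P^k_-$ yields $x(i,x_0)\in P^k_-$ for all $i\geq 0$. In the strong case, each $A(i)$ is $SSR_k$, so by (ii) sends $P^k_-\setminus\{0\}$ into $P^k_+\subseteq P^k_-$; nonsingularity of $A(i)$ prevents any iterate from collapsing to zero, so the induction continues and places $x(i,x_0)\in P^k_+$ for every $i\geq 1$ and every nonzero $x_0\in P^k_-$. For necessity, I would read the definition of $k$-positivity as the standard LTV invariance: for every initial time $i_0\geq 0$ and every $x(i_0)\in P^k_-$ the one-step successor $A(i_0)x(i_0)$ belongs to $P^k_-$ (respectively to $P^k_+$ in the strong case, when $x(i_0)\neq 0$). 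Since $x(i_0)$ is an arbitrary point of $P^k_-$, this is precisely the statement that $A(i_0)$ maps $P^k_-$ into $P^k_-$ (respectively $P^k_-\setminus\{0\}$ into $P^k_+$), and the cited characterizations then force $A(i_0)\in SR_k$ (respectively $SSR_k$). As $i_0$ was arbitrary, each $A(i)$ inherits the claimed sign-regularity property.

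The argument is essentially mechanical once (i) and (ii) are on hand, so I do not anticipate a substantive obstacle. The only conceptual point worth flagging is the time-varying reading of the phrase \emph{``the system maps $P^k_-$ to $P^k_-$''}, which I would pin down to mean invariance of $P^k_-$ under the one-step transition from every initial time $i_0\geq 0$; with that convention, necessity becomes a one-step statement and sufficiency is an immediate induction on $i$.
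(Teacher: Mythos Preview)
Your proposal is correct and matches the paper's approach: the paper does not give a proof at all but simply states that the corollary ``follows from~[Theorem~1, CTPDS] and~[Theorem~2, eyal\_k\_posi]'', which are precisely the single-step characterizations (i) and (ii) you invoke. Your write-up is therefore an explicit unpacking of what the paper leaves implicit, and your flagged convention about the LTV reading of ``maps $P^k_-$ to $P^k_-$'' (invariance from every initial time $i_0$) is the right way to make the necessity direction go through cleanly.
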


For the case~$k=1$ this is a generalization of [strongly] positive linear systems. For example, a system is typically defined as strongly positive if
 all the  entries of~$A(k)$ 
are positive, yet it is strongly~$1$-positive 
if all its entries are either all positive or all negative. 

Form here on we focus on the time-invariant linear system
\be\label{eq:ltis}
x({j}+1)=Ax({j}),\quad x(0)=x_0 \in \R^n,
\ee
where~$A$ is nonsingular and~$SSR_k$ for some~$k\in\{1,\dots,n-1\}$, leaving the time-varying case and nonlinear systems to a sequel paper.  Note that even for this LTI
case our results are new. 

 \begin{Example}\label{exa:23p}
 	Consider the system~\eqref{eq:ltis}
	with~$n=4$ and
 	\be\label{eq:mata3}
 	 A: =
 	\begin{bmatrix}     
 	9&2&-2&1 \\
 	3&10&1&-1 \\
 	-4&1.5&12&4\\
 	1&-1&2&15																
 	\end{bmatrix}, 
 	\ee
	for all~$j\geq 0$.
 	Note that~$A$ is not~$\text{SSR}_1$ (as it has both positive and negative entries), nor~$\text{SSR}_2$ (as it has both positive and negative minors of order two, e.g., $A(1,2|1,2)=84$, $A(3,4|1,3)=-20$). All the~$16$ minors of order three are positive, and $\det(A) \not =0$,
	so~$A$ is~$\text{SSR}_3$ and nonsingular.
	Figure~\ref{fig:perorder3}
 	shows~$s^+(x(j))$ as a function of~$j$ for~$x(0)=\begin{bmatrix}
 	1& 1&-1& 1\end{bmatrix}^T$. Note that~$s^-(x(0))=2$. 
 	It may be seen that, as expected,~$s^+(x(j))\leq 2$ for all~$j\geq 0$. 
 \end{Example}

 \begin{figure}[t]
 	\begin{center}
 		\includegraphics[scale=0.6]{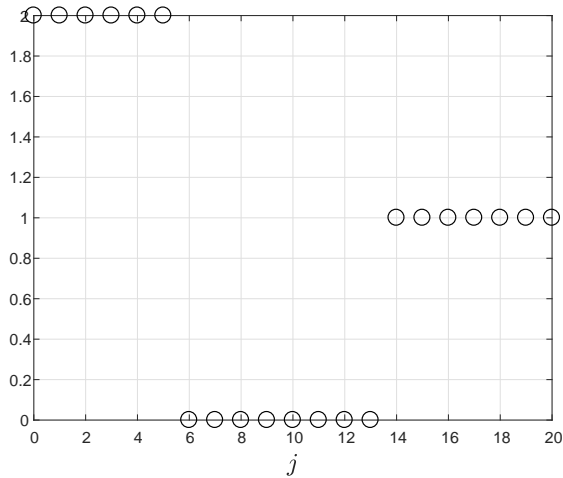}
 		\caption{ $s^+(x(j))$ as a function of~$j$
 			for the trajectory  in Example~\ref{exa:23p}.  }
 		\label{fig:perorder3}
 	\end{center}
 \end{figure}

\subsection{ $k$-exponential separation and its implications} 
Let $C\subseteq \R^n$ be closed and a \textit{convex cone}, i.e., $x,y \in C$ implies that~$\alpha x +\beta y\in C$ for all~$\alpha, \beta\geq 0$. 
Furthermore, let $C$ be \textit{pointed}, i.e., $C \cap (-C) =\{0\}$. 
Then $C$ induces a (partial) order 
defined by~$a\leq_C b$ if~$b-a \in C$.  For example, for~$C=\R^n_+$
we have~$a\leq_C b$ if and only if~$b_i\geq a_i   $ for all~$i\in \{ 1,\dots,n\}$.
Dynamical systems whose flow  preserves such an order are called \emph{monotone}, see, e.g., the excellent monograph~\cite{hlsmith}. 

Since~$P^k_-$ and~$P^k_+$ are not convex sets, 
$k$-positive   systems are not monotone systems
in the usual sense. 
However, the fact that~$P^k_-$ is a $k$-solid cone has strong implications for the dynamics of such systems. 

The first demonstration of this is a $k$-exponential separation property of~\eqref{eq:ltis}. This is closely related to the generalization of the Perron Theorem  in~\cite{fuscoPF}, see also~\cite{pls_sobolev}, but we give a direct proof based on the spectral properties of a nonsingular~$SSR_k$ matrix, see Theorem \ref{thm:sepe} below. 
We now review these properties following the presentation in~\cite{rola_spect}. 
 
Fix a nonsingular 
matrix~$A\in\R^{n\times n}$ that is~$SSR_k$ for some~$k\in\{1,\dots,n-1\}$. Let~$\epsilon \in\{-1,1\}$ denote the 
common sign of all the minors of order $k$ . 
Denote the eigenvalues of~$A$
by~$\lambda_i$, $i=1,\dots,n$, ordered such that
\be\label{eq:eigord}
|\lambda_1|\geq |\lambda_2|\geq \dots\geq |\lambda_n|>0,
\ee
and  let
\be \label{eq:vis}  
v^1,\; v^2,\dots,v^n
\ee
 denote the corresponding eigenvectors, 
with complex conjugate  eigenvalues appearing  in consecutive pairs
(we   say, with a mild abuse of notation,
 that~$z \in \C^n$ is \emph{complex}  
if~$z\not = \bar z$, where~$\bar z$ denotes
 the complex conjugate of~$z$). 
 	We may assume that every~$v^i$ is not purely imaginary.
	Indeed, otherwise  we can replace~$v^i$ 
	by~$\imag(v^i)$ that is a real eigenvector. 
	Also, the  fact that~$A$ is real means that if~$v^i$ is complex then its real and imaginary parts can be chosen as linearly independent. 
	
	 Define a set of real vectors~$u^1,u^2,\dots,u^n \in \R^n$ by going through the~$v^i$'s as follows.
 If~$v^1$ is real then~$u^1:=v^1$ and  proceed to examine~$v^2$.
 If~$v^1$ is complex (and whence~$v^2=\bar v^1$) then~$u^1:=\real(v^1)$, $u^2:=\imag(v^1)$ and proceed to examine~$v^3$, and so on. 

Suppose that for some~$i,j$ the eigenvector~$v^i$ is real 
and~$v^j$ is complex. Then 
  is not difficult to show that
since~$A$ is real and nonsingular,
the real vectors~$v^i,\real(v^j),\imag(v^j)$ are linearly independent.

	 Note that if~$v^i,v^{i+1}\in\C^n$ is a complex conjugate pair and~$c\in \C\setminus\{0\}$ is complex then 
\[
			c   v^i+\bar c v^{i+1} =2( \real(c) \real(v^i) -\imag(c)\imag(v^i)) \in \R^n\setminus\{0\},
\]
 so  by choosing an appropriate~$c \in \C\setminus\{0\}$ we can get any nonzero real linear combination of the real vectors~$ \real(v^i)$ and~$\imag(v^i) $. 

For~$p \leq q$,
we  say that a set~$c_p,\dots,c_q\in \C$ 
 \emph{matches} the set~$v^p,\dots,v^q$ of
 consecutive eigenvectors~\eqref{eq:vis}  if the~$c_i$'s are not all zero and for every~$i$ if  the vector~$v^i$ 
is real then~$c_i$ is real,
	and if~$v^i,v^{i+1}$ is a complex conjugate pair then~$c_{i+1}=\bar c_i$.  
	In particular, this implies that~$\sum_{i=p}^q   c_i v^i \in  \R^n. $

It was shown in~\cite{rola_spect} that if~$A\in\R^{n\times n}$ is nonsingular 
and~$SSR_k$ with signature~$\epsilon$, 
then  the product~$\epsilon \lambda_1\lambda_2\dots \lambda_{k} $
is real and positive,   
\be\label{eq:akoi}
 |\lambda_k|>|\lambda_{k+1}|,
\ee
and   if~$c_1,\dots,c_k   \in \C$
   [$c_{k+1},\dots,c_n \in \C $] match   
   the eigenvectors~$v^1,\dots,v^k$ [$v^{k+1},\dots,v^n$] of~$A$, then  
\begin{align}
					s^+(\sum_{i=1}^k c_i v^i)&\leq k-1 ,  \label{eq:suim}\\ 
					s^-(\sum_{i=k+1}^n c_i v^i)&\geq k .\label{eq:suim22}
\end{align}
Furthermore, let~$\{u^1,\dots,u^{n}\}$ be the set of real vectors
constructed from~$\{v^1,\dots,v^n\}$ as described above. 
Then~$u^1,\dots,u^{k}$ 
are linearly independent. In particular, if~$v^1,\dots,v^k$
are real then they are linearly independent.

\begin{Example}
Let
\be\label{eq:ant}
A:=\begin{bmatrix}  2& 6 & 0 &0 \\
											0&2&2&0 \\ 0 & 0&4 &2\\2&0&0 &4
\end{bmatrix}.
\ee
 It is straightforward to verify that this matrix is nonsingular, and that
 all minors of order $3$ are  positive,  
so~$A$ is~$SSR_3$ with~$\epsilon=1$. 
 Its eigenvalues   are  \footnote{All numerical values in this paper are subject to 4-digits accuracy.}

$$\lambda_1=3+s_1 ,\;
\lambda_2= 3+\mathfrak{i}s_2 , \;
\lambda_3=  3-\mathfrak{i}s_2 ,\;
\lambda_4= 3-s_1 ,$$
where~$\mathfrak{i}^2=-1$, $s_1:=\sqrt{1+4\sqrt{3}}\approx 2.8157 $,
and~$s_2:=\sqrt{-1+4\sqrt{3}} \approx 2.4348$. 
Note that~$\lambda_1\lambda_2\lambda_3$ is real and positive, and that~$|\lambda_3|>|\lambda_4|$.
The matrix of corresponding eigenvectors is
\begin{align*}
V&:=\begin{bmatrix} v^1 & v^2 & v^3 &v^4 \end{bmatrix}\\
&=\begin{bmatrix}
						\frac{s_1-1}{2} &  \frac{\mathfrak{i}s_2-1}{2} 
						&\frac{-(\mathfrak{i}s_2+1)}{2} &  \frac{-(s_1+1)}{2}
								\\[6pt]
						\frac{s_1^2-1}{12} &  \frac{-(1+s_2^2)}{12} & \frac{-(1+s_2^2)}{12} &  \frac{s_1^2-1}{12} \\[6pt]
						\frac{2}{s_1-1} & \frac{-2(1+\mathfrak{i}s_2)}{1+s_2^2} &\frac{2(-1+\mathfrak{i}s_2)}{1+s_2^2} &
							               \frac{-2}{s_1+1} \\[6pt] 
							1&1&1&1
\end{bmatrix},
\end{align*}
and thus
\begin{align*}
									U&:=\begin{bmatrix} u^1&  u^2  &u^3 & u^4 \end{bmatrix}\\
									&=\begin{bmatrix} v^1& \real(v^2) &\imag(v^2) & v^4 \end{bmatrix}\\
									& = \begin{bmatrix}
						\frac{s_1-1}{2} &  \frac{-1}{2} & \frac{s_2}{2 } &  \frac{-(s_1+1)}{2}    \\[6pt]
						\frac{s_1^2-1}{12} & \frac{-(1+s_2^2)}{12} & 0&   \frac{s_1^2-1}{12} \\[6pt]
						\frac{2}{s_1-1} & \frac{-2}{1+s_2^2} &\frac{-2s_2}{1+s_2^2} &  \frac{-2}{s_1+1} \\[6pt]
							1&1&0&1
\end{bmatrix}.
\end{align*}
Note that~$s^-(u^i)=s^+(u^i)=i-1$, $i=1,2,4$, and
$$1=s^-(u^3)<s^+(u^3)=2.$$ 
\end{Example}

We now state the main result in this subsection. 
Let~$ ||\cdot||:\R^n\to \R_+$ denote some vector norm.

\begin{Theorem}\label{thm:sepe}
Suppose that~$A\in\R^{n\times n}$ is 
nonsingular and~$SSR_k$ for some~$k\in\{1,\dots,n-1\}$. 
Let $u^1,\dots,u^n$ be the real vectors constructed from the eigenvectors of $A$ as described and let
\[
E:=\Span\{u^1,\dots,u^{k}\} , \quad E^c:=(\R^n\setminus{E}) \cup \{0\}.
\]
Then the following properties hold:
\begin{enumerate}[label=(\roman*)]
\item~$\Dim(E)=k$ and~$\Dim(E^c)=n-k$; \label{it:dimnki}
\item both~$E$ and~$E^c$ are invariant under~$A$; \label{it:eec}
\item $E\subseteq \Int(P^k_-) \cup\{0\}$, and~$E^c \cap P^k_-=\{0\}$. \label{it:pkcap}
\item \label{it:convexp} There exist~$a>0$ and~$b \in (0,1)$
such that for any~$x(0)\in E  $,
$\tilde x(0) \in E^c$, with~$||x(0)||=||\tilde x(0)||=1$,  the corresponding solutions  
 of~\eqref{eq:ltis} satisfy
\be\label{eq:expco}
 ||\tilde x(j)||  \leq a  b^j ||x(j)||.
\ee
\item \label{item:lpoit}
For any~$x(0)$ satisfying~
\be\label{eq:expconv}
x(0)=f+g,  \,\mbox{where}\, f\in  E \setminus\{0\}
\text{ and }  g\in E^c,
\ee
  there exists an~$q=q(x(0))\geq 0$ such that 
the corresponding solution  
 of~\eqref{eq:ltis} satisfies
$$
 s^ + (x(j))\leq k-1 \text{ for all } j\geq q. 
$$
\end{enumerate}
\end{Theorem}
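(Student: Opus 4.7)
The strategy is to verify the five items in the stated order, leveraging the spectral structure of a nonsingular $SSR_k$ matrix recalled just before the theorem. Items (i)--(iii) are essentially reformulations of those spectral facts, (iv) is a standard exponential-gap estimate built on the separation $|\lambda_k|>|\lambda_{k+1}|$, and (v) combines (iii) and (iv) with a compactness argument on the unit sphere of $E$. Throughout, I interpret $E^c$ as the complementary $A$-invariant subspace $\Span\{u^{k+1},\dots,u^n\}$, which is the natural reading consistent with its use in (v).

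For (i), linear independence of $u^1,\dots,u^k$ was already recalled before the theorem, so $\Dim(E)=k$; the same construction applied to $v^{k+1},\dots,v^n$, together with nonsingularity of $A$, yields $\Dim(E^c)=n-k$. For (ii), the only way $A$-invariance could fail for $E$ would be to split a complex conjugate pair between $E$ and $E^c$; but such a pair has equal modulus, which is excluded by $|\lambda_k|>|\lambda_{k+1}|$. Hence each real $v^i$ contributes a one-dimensional, and each complex pair a two-dimensional, $A$-invariant subspace to either $E$ or $E^c$. For (iii), any nonzero $x\in E$ is $\sum_{i=1}^{k}c_iv^i$ with coefficients $c_i$ matching the $v^i$'s; then \eqref{eq:suim} gives $s^+(x)\leq k-1$, i.e., $x\in P^k_+=\Int(P^k_-)$. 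Symmetrically, any nonzero $x\in E^c$ satisfies $s^-(x)\geq k$ by \eqref{eq:suim22}, so $x\notin P^k_-$.

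For (iv), fix $\varepsilon\in(0,\frac{1}{2}(|\lambda_k|-|\lambda_{k+1}|))$ and restrict $A$ to each invariant subspace. The spectral radius of $A|_{E^c}$ is $|\lambda_{k+1}|$, so there is $C_1$ with $\|A^j\tilde x(0)\|\leq C_1(|\lambda_{k+1}|+\varepsilon)^j$ for every unit $\tilde x(0)\in E^c$; similarly, the smallest-modulus eigenvalue of $A|_E$ is $|\lambda_k|$, so $\|(A|_E)^{-j}\|\leq C_2(|\lambda_k|-\varepsilon)^{-j}$ for some $C_2$, and from $\|x(0)\|\leq\|(A|_E)^{-j}\|\,\|A^jx(0)\|$ one gets $\|A^jx(0)\|\geq C_2^{-1}(|\lambda_k|-\varepsilon)^j$ for every unit $x(0)\in E$. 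Dividing yields \eqref{eq:expco} with $a:=C_1C_2$ and $b:=(|\lambda_{k+1}|+\varepsilon)/(|\lambda_k|-\varepsilon)\in(0,1)$. The main technical point is precisely this step: the slight inflation by $\varepsilon$ absorbs any polynomial growth from possible Jordan blocks on $E$ or $E^c$, giving the required universal constants.

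For (v), the case $g=0$ is immediate from (iii) and invariance, so assume $g\neq 0$ and set $y(j):=A^jf$, $z(j):=A^jg$, so $x(j)=y(j)+z(j)$. Since $E$ is $A$-invariant and $A$ is nonsingular, $y(j)\neq 0$ for all $j$, and $\hat y(j):=y(j)/\|y(j)\|$ lies in the compact set $K:=E\cap\{w:\|w\|=1\}$, which by (iii) is contained in the open set $P^k_+$. Hence there exists $\delta>0$ such that the $\delta$-neighborhood of $K$ remains in $P^k_+$. Applying (iv) to $f/\|f\|\in E$ and $g/\|g\|\in E^c$ gives $\|z(j)\|/\|y(j)\|\to 0$, so for all sufficiently large $j$ the vector $x(j)/\|y(j)\|=\hat y(j)+z(j)/\|y(j)\|$ lies within $\delta$ of $\hat y(j)$ and therefore in $P^k_+$. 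Since $P^k_+$ is invariant under nonzero scalar multiplication, $x(j)\in P^k_+$, i.e., $s^+(x(j))\leq k-1$ for all $j\geq q$ for a suitable $q=q(x(0))$.
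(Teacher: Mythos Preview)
Your proof is correct and follows essentially the same route as the paper: both exploit the spectral gap $|\lambda_k|>|\lambda_{k+1}|$ to obtain the invariant splitting, invoke \eqref{eq:suim}--\eqref{eq:suim22} for (iii), bound the $E$- and $E^c$-components separately for (iv), and normalize by the $E$-component for (v). Your treatment of (iv) (the $\varepsilon$-inflation absorbing possible Jordan-block polynomial factors) is in fact slightly more careful than the paper's, and in (v) you make the compactness of $E\cap\{\|w\|=1\}$ inside the open set $P^k_+$ explicit, whereas the paper instead applies $A$ once more to pass from $P^k_-$ to $P^k_+$ via the $SSR_k$ property.
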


Condition \ref{item:lpoit} does not necessarily mean that $x(0)$ is an element of $E$, as it may also include some non-zero combination of the vectors $u^{k+1}, \dots,u^n$ that 
are not in $E$.  Note that assertion~\ref{item:lpoit} implies that for almost any  initial condition, the corresponding solution of the dynamical system converges to~$P^k_+$ 
in finite time.

\begin{proof}
We begin by noting that the eigenvalues of~$A$ are ordered as
\be\label{eq:eifgome}
|\lambda_1|\geq\dots \geq |\lambda_k|>|\lambda_{k+1}|\geq\dots\geq|\lambda_n|>0.
\ee

Assertion~\ref{it:dimnki} follows immediately from the fact that~$u^1,\dots,u^k$ are linearly independent. 

Pick~$z \in E \setminus\{0\}$.
Since~$\prod_{\ell=1}^k \lambda_\ell  $
 is real, either~$\lambda_{k-1},\lambda_{k}$ are both real, or
they are a complex conjugate pair. Combining this with the definition of~$E$ implies that~$z=\sum_{i=1}^k c_i v^i$, for some~$c_1,\dots, c_k$ that match~$v^1,\dots,v^k$. 
Hence,~$Az=\sum_{i=1}^k c_i \lambda_i v^i $.
Clearly, $\{c_1\lambda_1,\dots,c_k \lambda_k\}$
 also match~$\{v^1,\dots,v^k\}$,  so~$E$ is invariant under~$A$. 

 It follows from~\eqref{eq:suim} and the construction of the~$u^i$'s that~$s^+(z)\leq k-1$ for any~$z\in E\setminus\{0\}$, that is, $E\setminus \{0\} \subseteq P^k_+$. Since~$P^k_+=\Int(P^k_-)$,
we conclude that~$E\subseteq \Int(P^k_-) \cup\{0\}$.

In the remainder of the proof we consider without loss of generality 
the generic case,  where~$u^1,\dots,u^n$ are linearly independent. 
Then~$E^c=\Span\{u^{k+1},\dots,u^n\}$.  The proofs of the properties of~$E^c$ are then very similar to the proofs for~$E$, and thus we  present here only the proofs for~$E$. 


To prove~\ref{it:convexp}, 
pick~$x(0) \in E\setminus\{0\}$ and~$\tilde x(0) \in E^c\setminus\{0\}$. Then~$x(0)=\sum_{i=1}^k c_i v^i$
and~$\tilde x(0)=\sum_{i=k+1}^n \tilde c_i v^i$, where~$c_1,\dots,c_k \in \C$ [$\tilde c_{k+1},\dots,
\tilde c_n \in \C$]
match~$v^1,\dots,v^k$ [$v^{k+1},\dots,v^n$].
Using~\eqref{eq:eifgome}, a
  straightforward argument shows  that there exists~$m>0$ such that 
\begin{align*}
 ||x(j)||&=||A^j x(0)|| \\
       &\geq m |\lambda_k|^j ||x(0)||.
\end{align*}
Similarly, there exists~$M>0$ such that~$||\tilde x(j)||\leq M  |\lambda_{k+1}|^j ||\tilde x(0)||$.
Thus,
\begin{align*}
 \frac  {||\tilde x(j)|| } {||x(j)||} & \leq \frac{M}{m} \left |
\frac { \lambda_{k+1}  } { \lambda_{k }  } \right |^j 
\frac  {||\tilde x(0)| |} {||x(0)||},
\end{align*}
and combining this with~\eqref{eq:akoi} 
 proves~\eqref{eq:expco}.

To prove~\ref{item:lpoit},
pick~$x(0)  $ such that \eqref{eq:expconv} is satisfied. 
 Then~$x(0)=\sum_{i=1}^n c_i v^i$,
 where~$c_1,\dots,c_n \in \C$  
match~$v^1,\dots,v^n$, and~$\sum_{i=1}^k c_i v^i \not =0$. 
Thus,
\begin{align*}
\frac{x(j)}{||\sum_{i=1}^k c_i \lambda_i^j v^i || }&=
\frac{ \sum_{i=1}^k c_i \lambda_i^j v^i } {||\sum_{i=1}^k c_i \lambda_i^j v^i || } + \frac{\sum_{i=k+1}^n c_i \lambda_i^j v^i} {
||\sum_{i=1}^k c_i \lambda_i^j v^i||}.
\end{align*}
The first term on the right-hand side of this equation is a 
unit vector in~$E$, and the second term 
goes to zero as~$j\to\infty$. 
Thus, there exists~$r\geq 0$ such that~$x(r) \in P^k_-$. 
Then~$x(r+1)\in P^k_+$,
 and the invariance of~$P^k_+$ implies that
$x(j)\in P^k_+$ for all~$j\geq r+1$. 
\end{proof}
\subsection{Dynamics of exterior products   }
Recall that if~$Z\in \R^{n\times k}$, 
with columns~$z^1,\dots,z^k \in \R^n$,
 then its~$k$th multiplicative compound 
$Z^{(k)} \in \R^{\binom{n}{k} \times \binom{n}{k} }$
is the exterior  product~$z^1 \wedge \dots \wedge z^k$, represented as a column vector~\cite{muldo1990}. 
For example, for~$z^1=\begin{bmatrix} r_1&r_2&r_3\end{bmatrix}^T$ 
and~$z^2=\begin{bmatrix} w_1&w_2&w_3\end{bmatrix}^T$, we have 
\begin{align*}
Z^{(2)}&= \begin{bmatrix}r_1 &w_1 \\ 
r_2 &w_2 \\
r_3 &w_3  \end{bmatrix} ^{(2)} \\
&=\begin{bmatrix}
 r_1 w_2-r_2 w_1 & r_1 w_3-r_3 w_1& r_2 w_3-r_3 w_2   
\end{bmatrix}^T.
\end{align*}

Consider  the dynamics~\eqref{eq:ltis},
where~$A\in\R^{n\times n}$ is~$SSR_k$, and pick~$k$ initial conditions~$w^1,\dots,w^k \in \R^n$. Let
\be\label{eq:decfCC}
X(j):=\begin{bmatrix} x( j,w^1) &\dots& x(j,w^k) \end{bmatrix} \in\R^{n\times k}. 
\ee
Then~$X(j+1)=A X(j)$. Taking the~$k$th multiplicative compound on both sides of this equation and using~\eqref{eq:mulcom}
yields
\be\label{eq:etaj}
		\eta(j+1)		=A^{(k)} \eta(j)	,
\ee
where
\be\label{eq:defetaj}
\eta(j):=x( j,w^1) \wedge\dots\wedge  x(j,w^k).
\ee
The magnitude of this wedge product 
is the    volume   of the $k$-dimensional parallelotope whose edges are the given vectors.

\begin{Example}
Suppose that~$n=3$,~$A:=\begin{bmatrix} \lambda_1& 0&0 \\0&\lambda_2&0 \\ 0&0&\lambda_3 \end{bmatrix}$,~$k=2$,~$w^1=e^p $ and~$w^2=e^q$ for some~$p,q\in \{1,2,3\}$. 
Then 
\begin{align*}
							\eta(j)&=x(j,e^p)\wedge x(j,e^q) \\
							    &=(\lambda_p^j e^p)\wedge ( \lambda_q^j e^q)\\
									&=\lambda_p ^j \lambda_q^j  (e^p\wedge   e^q)\\
									&=(\lambda_p   \lambda_q)^j \eta(0).
\end{align*}
This implies that under the dynamics~\eqref{eq:ltis}
 the unsigned area of the  
 parallelogram having~$e^p$ and~$e^q$ 
 as two of its sides  scales as~$(\lambda_p   \lambda_q)^j$.
On the 
other-hand,~$A^{(2)} = \begin{bmatrix} \lambda_1 \lambda_2& 0&0  \\
0&\lambda_1 \lambda_3& 0  \\
0&0 &  \lambda_2 \lambda_3 \end{bmatrix}$.
\end{Example}

If~$A$ is~$SSR_k$ then  either
every entry of~$B:=A^{(k)}$ is positive or negative. 
We assume that~$B>0$ (the case~$B<0$ can be treated similarly). By the Perron Theorem, 
the spectral radius of~$B$, denoted~$\rho(B)$,
 is a positive eigenvalue
and there exist positive vectors~$v^B,w^B$,
such that~$B v^B=\rho(B) v^B$ and~$B^T w^B=\rho(B) w^B$. 
By normalization, we may assume that~$(v^B)^T w^B=1$.
Then furthermore
\be\label{eq:jlimit}
 \lim_{j\to \infty} \left (\frac{B}{\rho(B)} \right )^j=v^B (w^B)^T 
\ee
(see, e.g.,~\cite[Chapter~8]{matrx_ana}). 
This yields the following result.
\begin{Lemma}\label{lem:pf}
Suppose that~$A$ is~$SSR_k$ and that~$B:=A^{(k)}>0$.
 Pick~$k$ initial conditions~$w^1,\dots,w^k \in \R^n$,
and define~$X(j)$ and~$\eta(j)$ as in
\eqref{eq:decfCC} and~\eqref{eq:defetaj}. Then
\be\label{eq:likjp}
\lim_{j\to \infty}   \frac{\eta(j)}{(\rho(B) ) ^j} = (w^B)^T \eta(0) v^B.  
\ee
\end{Lemma}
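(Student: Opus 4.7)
The plan is to observe that this is essentially a direct consequence of the recursion for~$\eta$ combined with the Perron-limit formula already stated in~\eqref{eq:jlimit}. First I would iterate the identity~\eqref{eq:etaj}, namely~$\eta(j+1)=B\eta(j)$, to conclude that
\[
\eta(j)=B^j \eta(0) \quad \text{for all } j\geq 0.
\]
This step requires no new ingredient beyond the Cauchy--Binet identity~\eqref{eq:mulcom} which was used to derive~\eqref{eq:etaj} in the first place.

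Next I would divide both sides by~$\rho(B)^j$, which is a positive number since~$B>0$ implies~$\rho(B)>0$ by the Perron theorem, obtaining
\[
\frac{\eta(j)}{\rho(B)^j}=\left(\frac{B}{\rho(B)}\right)^{\!j}\eta(0).
\]
At this stage I would invoke~\eqref{eq:jlimit} directly, which gives~$\big(B/\rho(B)\big)^j \to v^B (w^B)^T$ as~$j\to\infty$. Applying this limit to the constant vector~$\eta(0)$ yields
\[
\lim_{j\to\infty}\frac{\eta(j)}{\rho(B)^j}=v^B (w^B)^T \eta(0).
\]

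Finally I would note that~$(w^B)^T \eta(0)\in\R$ is a scalar, so it commutes with the vector~$v^B$, giving the claimed form~$(w^B)^T \eta(0)\, v^B$. There is essentially no obstacle: the lemma is an immediate corollary of~\eqref{eq:etaj}, \eqref{eq:jlimit}, and Perron's theorem as invoked in the paragraph preceding the statement. The only point worth making explicit is the hypothesis~$B>0$, which guarantees strict positivity of~$\rho(B)$ and of the Perron eigenvectors~$v^B, w^B$, thereby justifying the normalization~$(v^B)^T w^B=1$ used in~\eqref{eq:jlimit}.
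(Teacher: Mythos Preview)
Your proof is correct and follows exactly the same route as the paper: iterate~\eqref{eq:etaj} to get~$\eta(j)=B^j\eta(0)$, divide by~$\rho(B)^j$, and apply~\eqref{eq:jlimit}. The only difference is that you spell out explicitly why~$v^B(w^B)^T\eta(0)=(w^B)^T\eta(0)\,v^B$, which the paper leaves implicit.
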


\begin{proof}
	By~\eqref{eq:etaj}, 
	$\eta(j)		=B^j \eta(0)$,  i.e., $\frac{\eta(j)	}{(\rho(B))^j}	=\left(\frac{B}{\rho(B)}\right)^j \eta(0)$.
	Taking~$j\to\infty$ and using~\eqref{eq:jlimit} completes the proof. 
\end{proof}
\begin{Remark}
Suppose that the spectral radius~$\rho(A)$ of~$A$ satisfies $ 
 \rho(A)<1$. Then~$\lim_{j\to \infty} A^jx =0$ for all~$x\in\R^n$ and thus
\[
\eta(j) =(A^j w^1) \wedge\dots\wedge  (A^j  w^k),
\]
satisfies~$\lim_{j\to \infty} \eta(j)=0$.  Since every eigenvalue of~$A^{(k)}$ is the product of~$k$ eigenvalues of~$A$, in this case~$\rho(B)<1$
so~\eqref{eq:likjp} also shows that~$\eta(j)$
goes to zero as~$j\to\infty$.
\end{Remark}

\begin{Example} 
Consider the case~$n=3, k=2$,
$$A:= \begin{bmatrix} 0.79& 0.2& 0.01\\
 0.1& 0.8& 0.1\\
 0.01& 0.1& 0.89\end{bmatrix},$$ $w^1=e^1$, and~$w^2=e^2$. 
In other words, we consider the evolution of  
 the unsigned area of the  
 parallelogram with~$e^1$ and~$e^2$ 
 as two of its sides.
A calculation yields
\[
B:=A^{(2)}=\begin{bmatrix}
 0.612& 0.078& 0.012 \\0.077& 0.703& 0.177\\ 0.002& 0.088& 0.702
\end{bmatrix}
\]
 (so~$A$ is~$SSR_2$),
$\rho(B)= 0.8430$,
$$v^B=\begin{bmatrix} 0.2991 & 0.8075& 0.5084 \end{bmatrix}^T,$$
and~$$w^B=\begin{bmatrix} 0.2203 & 0.6394 & 0.8217 \end{bmatrix}
^T,$$ (note that~$(w^B)^T v^B=1$). 
We  compute~$\eta(15)$ in two different ways. 
First,
\begin{align}\label{eq:poity}
\eta(15)&=(A^{15}e^1)\wedge (A^{15}e^2)\nonumber \\
&=\begin{bmatrix} 0.2397& 0.2190& 0.1858 \end{bmatrix}^T    \nonumber \\& \; \; \wedge
 \begin{bmatrix} 0.4228&0.4103& 0.3859 \end{bmatrix}^T \nonumber \\
&=0.0057  e^1+ 0.0139  e^2+0.0083 e^3.
\end{align}
Second, it follows from~\eqref{eq:likjp} that
\begin{align*}
\eta(15)&\approx (\rho(B))^{15} (w^B)^T\eta(0) v^B\\
&=(\rho(B))^{15} w^B_1 v^B\\
&=\begin{bmatrix}  0.0051& 0.0137& 0.0086 \end{bmatrix}^T,
\end{align*} 
and this is indeed an approximation of~\eqref{eq:poity}.
\end{Example}

\section{Discussion}

Positive  systems and their nonlinear counterpart of monotone systems form a class of dynamical systems of fundamental importance in systems
biology, neuroscience, and bio-chemical networks, and has recently also found important
applications in control engineering for large-scale systems~\cite{RANTZER201572}. 

We introduced a new class of DT linear systems that  generalize  the important notion of DT positive linear systems. 
Such systems map the set of vectors with up to~$k-1$ sign variations to itself.

An interesting research direction is to study DT nonlinear systems whose variational equation is a~$k$-positive linear  system. Since the variational equation~\eqref{eq:vari} includes the integral of a matrix, this raises the following question: when is the integral of a matrix~$SSR_k$?

Lemma~\ref{lem:pf}
 describes a convergence to a ray   for the exterior product.
 We believe that this can generalized to the~DT
time-varying linear system~\eqref{eq:dts}, with the matrices $A(i)$ taken from a compact set, 
 using the Birkhoff-Hopf  theory~\cite{eveson_nussbaum_1995}. 

Another interesting research direction 
 may be the extension of~$k$-positive systems   to~DT control systems as was done  
for~CT monotone systems in~\cite{mcs_angeli_2003}.
Finally, our results highlight the importance of an efficient algorithm for 
determining if a given matrix is~$SSR_k$ for some~$k$. This issue is currently under study~\cite{paperLA}.

\end{document}